\documentclass[11pt]{amsart}
\usepackage{amsmath}
\usepackage[active]{srcltx}
\usepackage{t1enc}
\usepackage[latin2]{inputenc}
\usepackage{verbatim}
\usepackage{amsmath,amsfonts,amssymb,amsthm}
\usepackage[mathcal]{eucal}
\usepackage{enumerate}
\usepackage[centertags]{amsmath}
\usepackage{graphics}

\setcounter{MaxMatrixCols}{10}

\newtheorem{theorem}{Theorem}

\newtheorem{lemma}{Lemma}

\newtheorem{corollary}{Corollary}

\newtheorem*{FS}{Theorem F}

\begin{document}
\author{Ushangi Goginava and K\'aroly Nagy}
\address{U. Goginava, Department of Mathematics, Faculty of Exact and Natural
Sciences, Tbilisi State University, Chavchavadze str. 1, Tbilisi 0128,
Georgia}
\email{zazagoginava@gmail.comm}
\title[Strong Approximation]{Strong Approximation by Marcinkiewicz Means of
two-dimensional Walsh-Kaczmarz-Fourier Series}
\address{K. Nagy, Institute of Mathematics and Computer Sciences,
University of Ny\'\i regyh\'aza,
S\'ost\'oi str. 31/B,
4400 Ny\'\i regyh\'aza,
Hungary}
\email{nagy.karoly@nye.hu}
\date{}
\maketitle

\begin{abstract}
In this paper we study the exponential uniform strong approximation of
Marcinkiewicz type of two-dimensional Walsh-Kaczmarz-Fourier series. In
particular, it is proved that  the Marcinkiewicz type of two-dimensional
Walsh-Kaczmarz-Fourier series of the continuous function $f$ is uniformly
strong summable to the function $f$ exponentially in the power $1/2$.
Moreover, it is proved that this result is best possible.
\end{abstract}

\footnotetext{%
2000 Mathematics Subject Classification 42C10 .
\par
Key words and phrases: Walsh-Kaczmarz function, Strong Approximation,
Marcinkiewicz means.
\par
The research of first author was supported by Shota Rustaveli National Science Foundation
grant no.DI/9/5-100/13 (Function spaces, weighted inequalities for integral
operators and problems of summability of Fourier series)}

It is known that there exist continuous functions the trigonometric (Walsh)
Fourier series of which do not converge uniformly. However, as it was proved by Fej\'er 
\cite{Fe} in 1904, the arithmetic means of the differences between the
function and its Fourier partial sums converge uniformly to zero. The
problem of strong summation was initiated by Hardy and Littlewood \cite{HL}.
They generalized Fej\'er's result by showing that the strong means also
converge uniformly to zero for any continuous function. The investigation of
the rate of convergence of the strong means was started by Alexits \cite{AK}. Many papers have been published which are closely related with strong
approximation and summability. We note that a number of significant results
are due to Leindler \cite{Le1,Le2,Le3}, Totik \cite{To1,To2,To3}, Fridli and
Schipp \cite{FS2}, Gogoladze \cite{Go}, Goginava, Gogoladze, Karagulyan \cite{GGKCA}. Leindler has also published a monograph \cite{Le4}.

The results on strong summation and approximation of trigonometric Fourier
series have been extended for several other orthogonal systems. For
instance, concerning the Walsh system see Schipp \cite{Sch1,Sch2,Sch3},
Fridli, Schipp \cite{FS,FS2}, Fridli \cite{kacz}, Rodin \cite{Ro1},
Goginava, Gogoladze \cite{GGSMH, GGCA}, G\'at, Goginava, Karagulyan \cite{GGKAM,GGKJMAA}, Goginava, Gogoladze, Karagulyan \cite{GGKCA} and concerning
the Ciesielski system see Weisz \cite{We1,We2}. The summability of multiple
Walsh-Fourier series have been investigated in \cite{GN,GoAM, GoSM, N,
Webook2}.

Fridli \cite{kacz} proved that the following theorem is true.

\begin{FS}
Let $\psi $ be monotonically increasing function defined on $[0,\infty )$
for which $\lim_{u\rightarrow 0+}\psi (u)=0$. Then 
\begin{equation*}
\lim\limits_{n\rightarrow \infty }\frac{1}{n}\sum\limits_{k=1}^{n}\psi
\left( \left\vert S_{k}^{\kappa}(f;x)-f(x)\right\vert \right)
=0\quad (f\in C(G))
\end{equation*}%
if and only if there exists $A>0$ such that $\psi (t)\leq \exp (At)$ $(0\leq
t<\infty )$. Moreover, the convergence is uniform in $x$.
\end{FS}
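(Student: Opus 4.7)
The plan is to treat the two implications separately. For \emph{sufficiency}, assume $\psi(t)\le\exp(At)$; since $\psi$ is monotone it suffices to show that
\[
\frac{1}{n}\sum_{k=1}^{n}\exp\bigl(A|S_{k}^{\kappa}(f;x)-f(x)|\bigr)\longrightarrow 1
\]
uniformly in $x$. Given $\varepsilon>0$, I would choose a Walsh-Kaczmarz polynomial $P$ with $\|f-P\|_\infty<\varepsilon$ and $\deg P<N(\varepsilon)$, using density of polynomials in $C(G)$. For $k\ge N(\varepsilon)$ one has $S_{k}^{\kappa}(P)=P$, so with $g:=f-P$ the difference $S_{k}^{\kappa}(f;x)-f(x)$ equals $S_{k}^{\kappa}(g;x)-g(x)$. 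Using $\exp(A(u+\varepsilon))\le e^{A\varepsilon}\exp(Au)$, the task reduces to the \emph{exponential integrability estimate}
\[
\sup_{x\in G}\,\frac{1}{n}\sum_{k=1}^{n}\exp\bigl(A|S_{k}^{\kappa}(g;x)|\bigr)\le C
\qquad\text{whenever } A\|g\|_\infty\le c_{0},
\]
with $c_{0}$ an absolute constant. Granted this bound, taking $\varepsilon$ so that $2A\varepsilon\le c_{0}$ forces the exponential strong mean to be arbitrarily close to $1$, uniformly in $x$.

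The exponential bound above is the technical heart of the argument and is the \emph{main obstacle}. For the Walsh--Paley ordering it can be obtained from a John--Nirenberg-type inequality in the dyadic martingale setting together with the bounded distribution function of the Dirichlet kernel. For the Walsh--Kaczmarz ordering the difficulty is that the Kaczmarz rearrangement does not respect the dyadic martingale filtration; only the dyadic partial sums $S_{2^{m}}^{\kappa}=S_{2^{m}}$ coincide with their Paley counterparts. To overcome this I would decompose $\{1,\dots,n\}$ into the dyadic blocks $[2^{m},2^{m+1})$ and, on each block, invoke the Kaczmarz-to-Paley permutation (a bit reversal on the $m$-th Rademacher block) which rewrites $S_{k}^{\kappa}(g;x)$ as $S_{2^{m}}(g;x)$ plus a Walsh--Paley partial sum evaluated at a transformed point of $G$. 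Summing the exponential estimate over blocks via a geometric argument then transfers the Walsh--Paley bound to the Walsh--Kaczmarz setting.

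For \emph{necessity}, suppose $\psi(t)\exp(-At)\to\infty$ as $t\to\infty$ for every $A>0$. I would construct, via a lacunary Walsh--Kaczmarz sum of Fej\'er-type peaking functions adapted to the Kaczmarz Dirichlet kernels, a continuous $f$ and a point $x_{0}$ at which $|S_{k}^{\kappa}(f;x_{0})-f(x_{0})|\gtrsim\log k$ on a subset of $\{1,\dots,n\}$ of positive proportion for arbitrarily large $n$. Since $\psi(c\log k)$ then grows faster than every polynomial in $k$, the strong mean at $x_{0}$ cannot tend to zero, contradicting even pointwise convergence at $x_{0}$, let alone uniform convergence in $x$.
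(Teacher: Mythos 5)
First, a point of orientation: the paper does not prove Theorem F at all. It is quoted as known background from Fridli \cite{kacz}, so there is no in-paper proof to compare yours against; the closest relevant material is the machinery built for the two-dimensional theorems, namely the Glukhov/Sidon-type $L^{1}$ bound on dyadic blocks of products of Dirichlet kernels (Lemma \ref{lemma-GLU}, Corollary 1, Lemma \ref{kaczmain}), Skvortsov's identity $D_{2^{A}+j}^{\kappa}=D_{2^{A}}+r_{A}\cdot\bigl(D_{j}^{w}\circ\tau_{A}\bigr)$ with the measure-preserving bit reversal $\tau_{A}$, and the Taylor expansion of $\exp$ that converts the growth $\bigl(\frac{1}{2^{A}}\sum_{l}|S_{l,l}^{\kappa}|^{p}\bigr)^{1/p}\leq c(p+1)^{2}\|f\|_{C}$ into exponential summability. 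Your sufficiency outline is compatible with that route: the reduction to exponential means, the polynomial approximation, and the block-by-block transference via $\tau_{A}$ are the right moves, and the ``main obstacle'' you isolate is exactly what the Sidon-type inequality (or, in your variant, a John--Nirenberg argument in the spirit of Rodin) supplies; in one dimension the relevant bound is $\bigl(\frac{1}{n}\sum_{k}|S_{k}^{\kappa}(g;x)|^{p}\bigr)^{1/p}\leq cp\|g\|_{C}$, which is why the admissible exponent is $1$ here and $1/2$ in the paper's two-dimensional Theorem \ref{T1}.

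The necessity half, however, contains a genuine error. The target you set --- a continuous $f$ and a point $x_{0}$ with $|S_{k}^{\kappa}(f;x_{0})-f(x_{0})|\gtrsim\log k$ on a subset of $\{1,\dots,n\}$ of \emph{positive proportion} for arbitrarily large $n$ --- is unattainable: it would force $\frac{1}{n}\sum_{k\leq n}|S_{k}^{\kappa}(f;x_{0})-f(x_{0})|\to\infty$, contradicting strong $(H,1)$-summability of Walsh--Kaczmarz--Fourier series of continuous functions, which is precisely the case $\psi(t)=t\leq e^{t}$ of the sufficiency direction you have just argued; so your two halves are mutually inconsistent. There is also a logical slip: the negation of ``there exists $A$ with $\psi(t)\leq e^{At}$ for all $t$'' is not ``$\psi(t)e^{-At}\to\infty$ for every $A$'' but only that $\psi(u_{j})>e^{ju_{j}}$ along some sequence $u_{j}\to\infty$, and the construction must be tuned to that sparse sequence. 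The construction that works (Fridli's, mirrored in the paper's proof of Theorem \ref{T2}(b) and in \cite{GGCA}) produces for each scale a \emph{single} index $N_{A_{j}}\sim 4^{A_{j}}$ at which $|S_{N_{A_{j}}}^{\kappa}(f;0)|\geq B_{j}$ with $A_{j}\approx jB_{j}/c'$, so the achieved height is only $B_{j}\approx c'\log N_{A_{j}}/(2j\log 2)=o(\log N_{A_{j}})$, consistent with the Lebesgue constants; one then wins because $B_{j}$ is chosen from the negated hypothesis so that the single term $\psi(B_{j})/N_{A_{j}}$ is large, which amounts exactly to $\psi(u)$ exceeding $e^{Au}$ for arbitrarily large $A$ along the sequence. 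You should replace the ``logarithmic height on a positive proportion of indices'' idea by this single-huge-index, exponentially sparse construction.
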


In this paper we study the exponential uniform strong approximation of the
Marcinkiewicz means of the two-dimensional Walsh-Kaczmarz-Fourier series.
In particular, it is proved that the Marcinkiewicz type of the
two-dimensional Walsh-Kaczmarz-Fourier series of the continuous function $f$
is uniformly strong summable to the function $f$ exponentially in the power $%
1/2$. Moreover, it is proved that this result is best possible.

\section{Walsh functions}

Let $\mathbb{P}$ denote the set of positive integers, $\mathbb{N}:=\mathbb{P}%
\cup \{0\}.$ Denote $\mathbb{Z}_{2}$ the discrete cyclic group of order 2,
that is $\mathbb{Z}_{2}=\{0,1\},$ where the group operation is the modulo 2
addition and every subset is open. The Haar measure on $\mathbb{Z}_{2}$ is
given such that the measure of a singleton is 1/2. Let $G$ be the complete
direct product of the countable infinite copies of the compact groups $%
Z_{2}. $ The elements of $G$ are of the form $x=\left(
x_{0},x_{1},...,x_{k},...\right) $ with coordinates $x_{k}\in \{0,1\}\left(
k\in \mathbb{N}\right) .$ The group operation on $G$ is the coordinate-wise
addition, the measure (denoted by $\mu $) and the topology are the product
measure and topology. The compact Abelian group $G$ is called the Walsh
group. A base for the neighbourhoods of $G$ can be given in the following
way \cite{SWS}: 
\begin{equation*}
I_{0}\left( x\right) :=G,
\end{equation*}%
\begin{equation*}
I_{n}\left( x\right) :=I_{n}\left( x_{0},...,x_{n-1}\right) :=\left\{ y\in
G:\ y=\left( x_{0},...,x_{n-1},y_{n},y_{n+1},...\right) \right\} ,
\end{equation*}%
$\left( x\in G,n\in \mathbb{N}\right) $.
These sets are called dyadic intervals. Let $0=\left( 0:i\in \mathbb{N}%
\right) \in G$ denote the null element of $G,$ $I_{n}:=I_{n}\left( 0\right) $
$\left( n\in \mathbb{N}\right) $. Set $e_{n}:=\left( 0,...,0,1,0,...\right)
\in G,$ the $n$th coordinate of which is 1 and the rest are zeros $\left(
n\in \mathbb{N}\right) .$

For $k\in \mathbb{N}$ and $x\in G$ denote 
\begin{equation*}
r_{k}\left( x\right) :=\left( -1\right) ^{x_{k}}
\end{equation*}%
the $k$th Rademacher function. If $n\in \mathbb{N}$, then $%
n=\sum\limits_{i=0}^{\infty }n_{i}2^{i}$ can be written, where $n_{i}\in
\{0,1\}$ $\left( i\in \mathbb{N}\right) $, i. e. $n$ is expressed in the
number system of base 2. Let us denote the order of $n$ by $\left\vert n\right\vert :=\max \{j\in 
\mathbb{N}\mathbf{:}n_{j}\neq 0\}$, that is $2^{\left\vert n\right\vert
}\leq n<2^{\left\vert n\right\vert +1}.$

The Walsh-Paley system is defined as the sequence of Walsh-Paley functions: 
\begin{equation*}
w_{n}\left( x\right) :=\prod\limits_{k=0}^{\infty }\left( r_{k}\left(
x\right) \right) ^{n_{k}}=r_{\left| n\right| }\left( x\right) \left(
-1\right) ^{\sum\limits_{k=0}^{\left| n\right| -1}n_{k}x_{k}}\quad\left(
x\in G,n\in \mathbf{P}\right) .
\end{equation*}

The Walsh-Kaczmarz functions are defined by $\kappa _{0}:=1$ and for $n\geq 1$ 
\begin{equation*}
\kappa _{n}(x):=r_{|n|}(x)\prod_{k=0}^{|n|-1}(r_{|n|-1-k}(x))^{n_{k}}.
\end{equation*}%
For $A\in \mathbf{\ }\mathbb{N}$ define the transformation 
$\tau_{A}\colon G\rightarrow G$ by 
\begin{equation*}
\tau _{A}(x):=(x_{A-1},x_{A-2},...,x_{0},x_{A},x_{A+1},...).
\end{equation*}%
By the definition of $\tau _{A}$ (see \cite{Skv}), we have 
\begin{equation*}
\kappa _{n}(x)=r_{|n|}(x)w_{n-2^{|n|}}(\tau _{|n|}(x))\quad (n\in \mathbf{\ }%
\mathbb{N},x\in G).
\end{equation*}

The Dirichlet kernels are defined by 
\begin{equation*}
D_{n}^{\alpha }(x):=\sum_{k=0}^{n-1}\alpha _{k}(x), \quad (n\in \mathbb N),
\end{equation*}%
where $\alpha _{k}=w_{k}$ (for all $k\in \mathbb{P}$) or $\kappa _{k}$ (for
all $k\in \mathbb{P}$). Recall that (see \cite{SWS}) 
\begin{equation}
D_{2^{n}}(x):=D_{2^{n}}^{w}(x)=D_{2^{n}}^{\kappa }(x)=%
\begin{cases}
2^{n}, & \text{ if }x\in I_{n}(0), \\ 
0, & \text{ if }x\notin I_{n}(0).
\end{cases}%
  \label{dir}
\end{equation}
\begin{equation}\label{dir2}
D_{n}^{w}\left( t\right) =w_{n}\left( t\right) \sum\limits_{j=0}^{\infty
}n_{j}w_{2^{j}}\left( t\right) D_{2^{j}}\left( t\right) ,  
\end{equation}%
where $n=\sum_{j=0}^{\infty }n_{j}2^{j}$. The $k$th partial sum of the Walsh(-Kaczmarz)-Fourier series of function $f$ at point $x$ is denoted by $S_k^\alpha(f;x)$. 

The Fej\'er kernels are defined as follows 
\begin{equation*}
K_{n}^{\alpha }(x):=\frac{1}{n}\sum_{k=0}^{n-1}D_{k}^{\alpha }(x).
\end{equation*}

The Kronecker product $\left( \alpha _{n,m}:n,m\in \mathbb{\ N}\right) $ of
two Walsh(-Kaczmarz) system is said to be the two-dimensional
Walsh(-Kaczmarz) system. Thus, 
\begin{equation*}
\alpha _{n,m}\left( x,y\right) =\alpha _{n}\left( x\right) \alpha _{m}\left(
y\right) .
\end{equation*}

If $f\in L_{1}(G^2),$ then the number $\hat{f}^{\alpha }\left( n,m\right)
:=\int\limits_{G^{2}}f\alpha _{n,m}$ $\left( n,m\in \mathbb{N}\right) $ is
said to be the $\left( n,m\right) $th Walsh-(Kaczmarz-)Fourier coefficient
of $f.$ Denote by $S_{n,m}^{\alpha }$ the $\left( n,m\right) $th partial sum
of the Walsh-(Kaczmarz-)Fourier series of a function $f$. Namely,

\begin{equation*}
S_{n,m}^{\alpha }(f;x,y):=\sum_{k=0}^{n-1}\sum\limits_{i=0}^{m-1}\hat{f}
^{\alpha }(k,i)\alpha _{k,i}(x,y).
\end{equation*}

Let us fix $d\geq 1,d\in \mathbb{P}$. For Walsh group $G$ let $G^{d}$ be its
Cartesian product $G\times \cdots \times G$ taken with itself $d$-times.

The norm (or quasinorm) of the space $L_{p}$ is defined by 
\begin{equation*}
\left\Vert f\right\Vert _{p}:=\left( \int\limits_{G^{2}}\left\vert f\left(
x,y\right) \right\vert ^{p}d\mu \left( x,y\right) \right) ^{1/p}\quad \left(
0<p<+\infty \right) .
\end{equation*}

\section{Best Approximation}

Denote by $E_{l,r}\left( f\right) $ the best approximation of a function $%
f\in C\left( G^{2}\right) $ by Walsh-Kaczmarz polynomials of degree $\leq l$
of a variable $x$ and of degree $\leq r$ of a variable $y$ and let $%
E_{l}^{\left( 1\right) }\left( f\right) $ be the partial best approximation
of  a function $f\in C\left( G^{2}\right) $ by Walsh-Kaczmarz polynomials
of degree $\leq l$ of a variable $x$, whose coefficients are continuous
functions of the remaining variable $y$, in
particular, best approximation with respect to polynomials $T_{l}^{\left(
1\right) }\left( x,y\right) :=\sum\limits_{j=0}^{l-1}\alpha _{j}\left(
y\right) \kappa_{j}\left( x\right)$. Analogously, we can define 
$E_{r}^{\left( 2\right) }\left( f\right) $.

Let $2^{L}\leq l<2^{L+1}$ and $E_{2^{L},2^{L}}\left( f\right) :=\left\Vert
f-T_{2^{L},2^{L}}\right\Vert _{C}$, 
where $E_{2^{L},2^{L}}\left( f\right)$ is the best approximation of 
$f\in C\left( G^{2}\right) $ by Walsh-Kaczmarz polynomials $T_{2^{L},2^{L}}$.

Since 
\begin{equation*}
\left\Vert S_{2^{L},2^{L}}\left( f\right) \right\Vert _{C}\leq \left\Vert
f\right\Vert _{C}
\end{equation*}%
we can write 
\begin{eqnarray}\label{B1}
\left\vert S_{l,l}^{\kappa}\left( f;x,y\right) -f\left( x,y\right) \right\vert
&\leq &\left\vert S_{l,l}^{\kappa}\left( f-S_{2^{L},2^{L}}\left( f\right)
;x,y\right) \right\vert +\left\Vert S_{2^{L},2^{L}}\left( f\right)
-f\right\Vert _{C}  \notag \\
&\leq &\left\vert S_{l,l}^{\kappa}\left( f-S_{2^{L},2^{L}}\left( f\right)
;x,y\right) \right\vert +\left\Vert S_{2^{L},2^{L}}\left(
f-T_{2^{L},2^{L}}\right) f\right\Vert _{C}  \notag \\
&&+\left\Vert f-T_{2^{L},2^{L}}f\right\Vert _{C}  \notag \\
&\leq &\left\vert S_{l,l}^{\kappa}\left( f-S_{2^{L},2^{L}}\left( f\right)
;x,y\right) \right\vert +2E_{2^{L},2^{L}}\left( f\right) .  \notag
\end{eqnarray}

It is well known that (see \cite{GGCA}) 
\begin{equation}
E_{2^{L},2^{L}}\left( f\right) \leq 2E_{2^{L}}^{\left( 1\right) }\left(
f\right) +2E_{2^{L}}^{\left( 2\right) }\left( f\right) .  \label{B2}
\end{equation}

It is easily seen that 
\begin{equation}
\left\Vert f-S_{2^{L},2^{L}}\left( f\right) \right\Vert _{C}\leq
2E_{2^{L},2^{L}}\left( f\right) .  \label{B4}
\end{equation}

\section{Main results}

\begin{theorem}
\label{T1}Let $f\in C\left( G^{2}\right) $. Then there exists a positive constant $c\left( f,A\right)$ depending only on $f$ and $A$ such that
the inequality 
\begin{eqnarray*}
&&\left\Vert \frac{1}{n}\sum\limits_{l=1}^{n}\left( e^{A\left\vert
S_{ll}^{k}\left( f\right) -f\right\vert ^{1/2}}-1\right) \right\Vert _{C} \\
&\leq &\frac{c\left( f,A\right) }{n}\sum\limits_{l=1}^{n}\left( \sqrt{%
E_{l}^{\left( 1\right) }\left( f\right) }+\sqrt{E_{l}^{\left( 2\right)
}\left( f\right) }\right) 
\end{eqnarray*}%
is satisfied for any $A>0.$
\end{theorem}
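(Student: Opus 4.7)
The plan is to partition $\{1,\ldots,n\}$ into dyadic blocks $B_L:=[2^L,2^{L+1})\cap\{1,\ldots,n\}$ (with $2^N\le n<2^{N+1}$) and analyse each block separately, using the machinery prepared in Section~2. On a block $B_L$ we take $S_{2^L,2^L}(f)$ as the comparison polynomial: for any $l\in B_L$, the chain of inequalities preceding \eqref{B2} yields
\[
|S_{l,l}^{\kappa}(f;x,y)-f(x,y)| \;\le\; |S_{l,l}^{\kappa}(g_L;x,y)| + 2E_{2^L,2^L}(f),\qquad g_L:=f-S_{2^L,2^L}(f),
\]
while \eqref{B4} gives $\|g_L\|_{C}\le 2E_{2^L,2^L}(f)$ and \eqref{B2} reduces $E_{2^L,2^L}(f)$ to $2E_{2^L}^{(1)}(f)+2E_{2^L}^{(2)}(f)$. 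Using $\sqrt{a+b}\le\sqrt a+\sqrt b$ and setting $\beta_L:=\exp(A\sqrt{2E_{2^L,2^L}(f)})$, I factor
\[
e^{A|S_{l,l}^{\kappa}(f)-f|^{1/2}}-1 \;\le\; \beta_L\bigl(e^{A|S_{l,l}^{\kappa}(g_L)|^{1/2}}-1\bigr) + (\beta_L-1),
\]
where $\beta_L\le c(f,A)$ and $\beta_L-1\le c(f,A)\sqrt{E_{2^L,2^L}(f)}$ because $t\mapsto (e^{At}-1)/t$ is bounded on $[0,\sqrt{2\|f\|_C}]$.

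The heart of the matter is a block-level exponential strong-means bound at the half-power rate,
\[
\frac{1}{|B_L|}\sum_{l\in B_L}\bigl(e^{A|S_{l,l}^{\kappa}(g_L)(x,y)|^{1/2}}-1\bigr) \;\le\; c(A)\sqrt{\|g_L\|_{C}},\qquad (x,y)\in G^2.
\]
I would deduce it from the elementary inequality $e^{t}-1\le t\,e^{t}$ followed by the Cauchy--Schwarz inequality on the block average, which splits the task in two: a linear strong-means bound $|B_L|^{-1}\sum_{l\in B_L}|S_{l,l}^{\kappa}(g_L)|\le C\|g_L\|_C$, and a uniform exponential bound $|B_L|^{-1}\sum_{l\in B_L}e^{2A|S_{l,l}^{\kappa}(g_L)|^{1/2}}\le c(A)$ valid on the unit ball of $C(G^2)$. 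Both should be proved as separate lemmas, using the Skvorcov representation $\kappa_n(x)=r_{|n|}(x)w_{n-2^{|n|}}(\tau_{|n|}(x))$, the Walsh Dirichlet-kernel identities \eqref{dir} and \eqref{dir2}, and sharp $L^p$ control of the Marcinkiewicz kernel whose norm grows no faster than $\sqrt p$. This sharp rate $\sqrt p$ is exactly what produces the $1/2$ power in the exponential (via the Taylor coefficients $A^k/k!$ combined with $\sqrt{k!}$-type moments), and establishing it is the principal technical obstacle.

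Once both ingredients are in place, the contribution of $B_L$ is bounded by $c(f,A)\sqrt{E_{2^L,2^L}(f)}\le c(f,A)\bigl(\sqrt{E_{2^L}^{(1)}(f)}+\sqrt{E_{2^L}^{(2)}(f)}\bigr)$. Weighting by $|B_L|/n\le 2^L/n$ and summing over $L=0,\ldots,N$ yields
\[
\frac{c(f,A)}{n}\sum_{L=0}^{N}2^L\bigl(\sqrt{E_{2^L}^{(1)}(f)}+\sqrt{E_{2^L}^{(2)}(f)}\bigr),
\]
and the monotonicity of $E_l^{(i)}(f)$ in $l$ gives $2^L\sqrt{E_{2^L}^{(i)}(f)}\le 2\sum_{l\in B_{L-1}}\sqrt{E_l^{(i)}(f)}$ for $L\ge 1$ (with the $L=0$ term absorbed into $c(f,A)$). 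Hence this dyadic sum is bounded, up to an absolute constant, by $\frac{1}{n}\sum_{l=1}^n\bigl(\sqrt{E_l^{(1)}(f)}+\sqrt{E_l^{(2)}(f)}\bigr)$, which finishes the proof modulo the two auxiliary strong-means lemmas identified above.
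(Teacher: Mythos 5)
Your overall architecture --- dyadic blocks, comparison with $S_{2^L,2^L}(f)$, reduction of $E_{2^L,2^L}(f)$ to $E_{2^L}^{(1)}(f)+E_{2^L}^{(2)}(f)$ via \eqref{B2}, and the final summation using the monotonicity of $E_l^{(i)}(f)$ --- is exactly the skeleton of the paper's Lemma~\ref{BAest}, and those steps are sound. The gap lies in the two auxiliary lemmas you defer to, and it is not a technicality. First, the rate you assert for the $L^p$ control is wrong: the sharp bound, which is the paper's Lemma~\ref{mainlemma}, is $\bigl(\frac{1}{2^{L}}\sum_{l=2^{L}}^{2^{L+1}-1}|S_{l,l}^{\kappa}(g)|^{p}\bigr)^{1/p}\le c(p+1)^{2}\|g\|_{C}$, the factor $(p+1)^{2}$ coming from the $c\,d!\,2^{d}$ bound of Lemma~\ref{kaczmain} applied with $d=2p$ Dirichlet factors (note $((2p)!)^{1/p}\asymp p^{2}$). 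A growth rate $p^{\alpha}$ of this block $L^p$ norm corresponds to exponential summability at power $1/\alpha$, so it is $p^{2}$ --- not $\sqrt{p}$ --- that produces the exponent $1/2$. A $\sqrt{p}$ rate would yield summability of $e^{A|S_{l,l}^{\kappa}(g)|^{2}}$, which contradicts Theorem~\ref{T2}(b).

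Second, and as a consequence, your ingredient (b) --- the uniform bound $\frac{1}{|B_L|}\sum_{l\in B_L}e^{2A|S_{l,l}^{\kappa}(g)|^{1/2}}\le c(A)$ on the unit ball of $C(G^{2})$ --- is not available. Expanding the exponential and applying Lemma~\ref{mainlemma} with $p=k/2$ gives the series $\sum_{k}\frac{(2A)^{k}}{k!}\,(c\|g\|_{C})^{k/2}(k/2+1)^{k}$, whose $k$th term behaves, by Stirling, like $\bigl(cA\sqrt{\|g\|_{C}}\bigr)^{k}$; this converges only when $\|g\|_{C}\lesssim A^{-2}$, and since the $(p+1)^{2}$ rate is sharp (this is precisely why $1/2$ is the critical exponent in Theorem~\ref{T2}(b)), no such unit-ball bound can hold for large $A$. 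The missing idea is that $\|g_{L}\|_{C}\le 2E_{2^{L},2^{L}}(f)\to 0$: for all $L\ge L_{0}(f,A)$ the series converges and its sum is dominated by its first term, producing the factor $\sqrt{E_{2^{L},2^{L}}(f)}$, while the finitely many blocks $L<L_{0}$ are absorbed into $c(f,A)$ --- which is exactly why the constant in Theorem~\ref{T1} depends on $f$ and not merely on $\|f\|_{C}$. This is what the paper's (omitted) proof, quoted from \cite{GGCA}, carries out through inequality \eqref{bestestimation}: one Taylor-expands $e^{A|S_{l,l}^{\kappa}(f)-f|^{1/2}}-1$ and applies \eqref{bestestimation} with $p=k/2$ term by term, controlling $(p+1)^{2p}/k!$ against the small factor $\bigl(E_{l}^{(i)}(f)\bigr)^{(k-1)/2}$. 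Your Cauchy--Schwarz reduction is an elegant packaging, but it stands or falls with ingredient (b), which as stated is false.
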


We say that the function $\psi $ belongs to the class $\Psi $ if it increases
on $[0,+\infty )$ and 
\begin{equation*}
\lim\limits_{u\rightarrow 0}\psi \left( u\right) =\psi \left( 0\right) =0.
\end{equation*}

\begin{theorem}\label{T2}
a)Let $\varphi \in \Psi $ and let the inequality 
\begin{equation}
\overline{\lim\limits_{u\rightarrow \infty }}\frac{\varphi \left( u\right) }{%
\sqrt{u}}<\infty  \label{T2-C}
\end{equation}%
hold. Then for any function $f\in C\left( G^{2}\right) $ the equality 
\begin{equation}
\lim\limits_{n\rightarrow \infty }\left\Vert \frac{1}{n}\sum%
\limits_{l=1}^{n}\left( e^{\varphi \left( \left\vert S_{ll}^{k}\left(
f\right) -f\right\vert \right) }-1\right) \right\Vert _{C}=0
\label{strongconv}
\end{equation}%
is satisfied.

b) For any function $\varphi \in \Psi $ satisfying the condition 
\begin{equation}
\overline{\lim\limits_{u\rightarrow \infty }}\frac{\varphi \left( u\right) }{%
\sqrt{u}}=\infty
\end{equation}%
there exists a function $F\in C\left( G^{2}\right) $ such that 
\begin{equation*}
\overline{\lim\limits_{m\rightarrow \infty }}\frac{1}{m}\sum%
\limits_{l=1}^{m}\left( e^{\varphi \left( \left\vert S_{ll}^{k}\left(
F;0,0\right) -f\left( 0,0\right) \right\vert \right) }-1\right) =+\infty .
\end{equation*}
\end{theorem}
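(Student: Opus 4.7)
Part (a) is a direct consequence of Theorem~\ref{T1}. Since $\varphi \in \Psi$ satisfies $\lim_{u\to 0+}\varphi(u)=0$ and $\limsup_{u\to\infty}\varphi(u)/\sqrt{u} < \infty$, I would first argue that for every $\varepsilon>0$ there is $A=A(\varepsilon)>0$ with
\[
\varphi(u)\le A\sqrt{u}+\varepsilon \qquad (u\ge 0):
\]
choose $u_0$ so small that $\varphi\le\varepsilon$ on $[0,u_0]$, then $A$ large enough that $\varphi(u)\le A\sqrt{u}$ on $[u_0,\infty)$. This gives
\[
e^{\varphi(u)}-1 \le e^{\varepsilon}\bigl(e^{A\sqrt{u}}-1\bigr)+(e^{\varepsilon}-1).
\]
Substituting $u=|S_{ll}^{\kappa}(f)-f|$ and invoking Theorem~\ref{T1} with parameter $A$ yields
\[
\Bigl\|\tfrac{1}{n}\textstyle\sum_{l=1}^{n}\bigl(e^{\varphi(|S_{ll}^{\kappa}(f)-f|)}-1\bigr)\Bigr\|_{C}
\le \frac{e^{\varepsilon}c(f,A)}{n}\sum_{l=1}^{n}\Bigl(\sqrt{E_{l}^{(1)}(f)}+\sqrt{E_{l}^{(2)}(f)}\Bigr)+(e^{\varepsilon}-1).
\]
Because $f\in C(G^{2})$ forces $E_{l}^{(1)}(f),E_{l}^{(2)}(f)\to 0$, the Cesàro average tends to $0$; passing first to $\limsup_{n}$ and then sending $\varepsilon\to 0$ gives (\ref{strongconv}).

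For Part (b) I would build a Banach--Steinhaus-type counterexample. The hypothesis furnishes a sequence $u_{k}\to\infty$ with $\varphi(u_{k})/\sqrt{u_{k}}\to\infty$. Exploiting the known divergence of the Walsh--Kaczmarz Lebesgue constants (via the Skvortsov representation $\kappa_{n}(x)=r_{|n|}(x)w_{n-2^{|n|}}(\tau_{|n|}(x))$ recalled in the paper), one can produce, for each $k$, a Walsh--Kaczmarz polynomial $P_{k}\in C(G^{2})$ of small sup norm whose non-zero spectrum lies in a narrow dyadic block around $2^{N_{k}}$ and for which there is a window $J_{k}\subset[1,m_{k}]$ of relative density $\delta_{k}$ on which $|S_{ll}^{\kappa}(P_{k};0,0)-P_{k}(0,0)|\ge u_{k}$. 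Choosing the frequencies $N_{k}$ so lacunary that for $l\in J_{k}$ the partial sum $S_{ll}^{\kappa}(\cdot;0,0)$ either captures all of $P_{j}$ or none of it for $j\ne k$, and scaling coefficients so that $F=\sum_{k}c_{k}P_{k}$ converges uniformly, one obtains $F\in C(G^{2})$ with
\[
\frac{1}{m_{k}}\sum_{l=1}^{m_{k}}\bigl(e^{\varphi(|S_{ll}^{\kappa}(F;0,0)-F(0,0)|)}-1\bigr)\ge \delta_{k}\bigl(e^{\varphi(u_{k})}-1\bigr),
\]
and a tuning $\delta_{k}e^{\varphi(u_{k})}\to\infty$ is possible precisely because $\varphi(u_{k})\gg\sqrt{u_{k}}$: the growth of $\varphi(u_{k})$ outstrips any logarithmic loss coming from $\log(1/\delta_{k})$.

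The heart of the difficulty, and the step I expect to fight hardest, is Part (b), for three reasons which must be reconciled simultaneously: (i) each block $P_{k}$ must have small uniform norm yet drive the diagonal partial sum at $(0,0)$ past the threshold $u_{k}$ on a set $J_{k}$ of quantified positive density; (ii) the spectra of the $P_{k}$'s must be arranged so that the Marcinkiewicz partial sums see the blocks independently, with no destructive cancellation at the base point; and (iii) one needs a sharp enough lower bound for the diagonal Walsh--Kaczmarz Dirichlet kernels $D_{l}^{\kappa}\otimes D_{l}^{\kappa}$ on a positive-density subsequence of indices. The last ingredient is what transfers the Walsh--Paley divergence phenomenology, via the $\tau_{|n|}$-rearrangement, into the required quantitative statement on the diagonal; once it is in hand, the rest of (b) is a careful packaging argument.
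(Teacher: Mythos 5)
Your part (a) is correct and amounts to the same reduction the paper makes: the paper invokes Gogoladze's comparison lemma (Lemma \ref{gogoladze}) to pass from $\varphi$ to $Au^{1/2}$ and then applies Theorem \ref{T1}; your elementary inequality $\varphi(u)\le A\sqrt u+\varepsilon$ followed by $e^{a+b}-1=e^{b}(e^{a}-1)+(e^{b}-1)$ does the same job directly. No complaints there.

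Part (b), however, is a plan rather than a proof, and the step you yourself flag as the hardest --- producing blocks $P_{k}$ with small sup norm, controlled spectrum, and $|S_{ll}^{\kappa}(P_{k};0,0)|\ge u_{k}$ on a window of quantified density --- is exactly the content of the theorem and is nowhere carried out. Moreover you miss the structural device that makes the paper's construction manageable: the counterexample is taken in tensor-product form $F(x,y)=f(x)f(y)$, so that $S_{l,l}^{\kappa}(F;0,0)=\bigl(S_{l}^{\kappa}(f;0)\bigr)^{2}$ and the two-dimensional condition $\limsup_{u\to\infty}\varphi(u)/\sqrt u=\infty$ becomes the one-dimensional condition $\limsup_{v\to\infty}\psi(v)/v=\infty$ for $\psi(v):=\lambda(v^{2})v$, where $\varphi(u)=\lambda(u)\sqrt u$. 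The whole problem then reduces to a one-dimensional Walsh--Kaczmarz construction: with $N_{A}=2^{2A}+2^{2A-2}+\cdots+1$ and the Skvortsov identity $D_{2^{A}+j}^{\kappa}=D_{2^{A}}+r_{A}\,D_{j}^{w}\circ\tau_{A}$ one obtains the explicit lower bound $|D_{N_{A_k}}^{\kappa}(t)|\ge c2^{2l}$ on suitable dyadic cells, from which a lacunary sum of blocks $f_{j}$ gives $|S_{N_{A_{k}}}^{\kappa}(f;0)|\ge B_{k}$ with $\psi(B_{k})\ge 5A_{k}$. No positive-density window is needed: a single index suffices, since $e^{\psi(B_{k})}\ge e^{5A_{k}}\gg 4^{A_{k}}\approx N_{A_{k}}$ already beats the Ces\`aro normalization $1/N_{A_{k}}$. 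Your density-window scheme is not obviously unworkable (it contains the single-index case $\delta_{k}=1/m_{k}$), but as written the existence of the $P_{k}$ with the stated joint properties is an unproved assertion, and without the tensor-product reduction you would additionally have to establish two-dimensional lower bounds for the diagonal kernels $D_{l}^{\kappa}\otimes D_{l}^{\kappa}$ near the origin, which the paper entirely avoids.
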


\section{Auxiliary Results}
In this paper $c$ is a positive constant, which is not necessary the same at different occurrences.
\begin{lemma}
\label{gogoladze}(Gogoladze \cite{Go}) Let $\varphi ,\psi \in \Psi $ and the
equality 
\begin{equation*}
\lim\limits_{n\rightarrow \infty }\frac{1}{n}\sum\limits_{l=1}^{n}\psi
\left( \left\vert S_{l,l}^{\kappa}\left( f;x,y\right) -f\left( x,y\right)
\right\vert \right) =0
\end{equation*}%
be satisfied at the point $\left( x_{0},y_{0}\right) $ or uniformly on a set 
$E\subset G^{2}$. If 
\begin{equation*}
\overline{\lim\limits_{u\rightarrow \infty }}\frac{\varphi \left( u\right) }{%
\psi \left( u\right) }<\infty ,
\end{equation*}%
then the equality 
\begin{equation*}
\lim\limits_{n\rightarrow \infty }\frac{1}{n}\sum\limits_{l=1}^{n}\varphi
\left( \left\vert S_{l,l}^{\kappa}\left( f;x,y\right) -f\left( x,y\right)
\right\vert \right) =0
\end{equation*}%
is satisfied at the point $\left( x_{0},y_{0}\right) $ or uniformly on a set 
$E\subset G^{2}$.
\end{lemma}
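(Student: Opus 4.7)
The plan is the standard three-way-split comparison argument. At the point $(x_0,y_0)$ write $a_l := |S^{\kappa}_{l,l}(f;x_0,y_0)-f(x_0,y_0)|$ and fix $\varepsilon>0$. First, because $\varphi\in\Psi$ satisfies $\varphi(0+)=0$, I choose $\delta>0$ so small that $\varphi(\delta)<\varepsilon$. Next, using the hypothesis $\overline{\lim}_{u\to\infty}\varphi(u)/\psi(u)<\infty$, I choose a threshold $u_0>\delta$ and a constant $M>0$ such that $\varphi(u)\le M\psi(u)$ for every $u\ge u_0$.

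Now I partition $\{1,\dots,n\}$ into $I_1:=\{l:a_l\le\delta\}$, $I_2:=\{l:\delta<a_l\le u_0\}$, $I_3:=\{l:a_l>u_0\}$, and estimate the Cesàro mean block by block. On $I_1$, monotonicity of $\varphi$ gives $\varphi(a_l)\le\varphi(\delta)<\varepsilon$, so that block contributes at most $\varepsilon$. On $I_3$ the comparison $\varphi\le M\psi$ is available, so
\[
\frac{1}{n}\sum_{l\in I_3}\varphi(a_l)\le M\cdot\frac{1}{n}\sum_{l=1}^{n}\psi(a_l)\longrightarrow 0
\]
by hypothesis. The middle block $I_2$ is handled by a Chebyshev-type counting bound: since $\psi$ is monotone and (implicitly, from $\overline{\lim}\varphi/\psi<\infty$) strictly positive on $(0,\infty)$,
\[
\#\{l\le n: a_l>\delta\}\le\frac{1}{\psi(\delta)}\sum_{l=1}^{n}\psi(a_l),
\]
so $\#I_2/n=o(1)$; coupled with $\varphi(a_l)\le\varphi(u_0)$ on $I_2$, this block contributes $o(1)$ as well.

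Adding the three contributions yields $\limsup_{n\to\infty}\tfrac{1}{n}\sum_{l=1}^{n}\varphi(a_l)\le\varepsilon$, and letting $\varepsilon\downarrow 0$ closes the pointwise statement. For the version that is uniform on $E\subset G^2$, I would observe that $\delta,u_0,M$ depend only on $\varphi$ and $\psi$, and that both appearances of $\tfrac{1}{n}\sum_{l=1}^{n}\psi(a_l)$ are, by the assumption of uniform $\psi$-convergence on $E$, bounded uniformly in $(x,y)\in E$; hence each of the three block estimates passes through uniformly. There is no serious obstacle here—this is essentially a Hardy–Littlewood style bookkeeping—and the only mild technical point is guaranteeing $\psi(\delta)>0$ so that the Chebyshev inequality is meaningful, which is handled once and for all by the ratio hypothesis.
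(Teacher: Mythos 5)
The paper does not prove this lemma at all: it is imported verbatim from Gogoladze \cite{Go} as an auxiliary result, so there is no in-paper argument to compare yours against. Judged on its own, your three-block decomposition (small values killed by $\varphi(\delta)<\varepsilon$, large values by the comparison $\varphi\le M\psi$ valid for $u\ge u_0$, and the middle range by a Chebyshev count of $\{l\le n:\ a_l>\delta\}$) is the standard Hardy--Littlewood/Gogoladze argument and is sound; the uniform version on $E$ does go through exactly as you say, since $\delta$, $u_0$, $M$ depend only on $\varphi$ and $\psi$ and every block is dominated by quantities that tend to $0$ uniformly by hypothesis. One correction, though: the positivity $\psi(\delta)>0$ that your Chebyshev step requires does \emph{not} follow from $\overline{\lim}_{u\rightarrow\infty}\varphi(u)/\psi(u)<\infty$, as you claim --- that hypothesis constrains $\psi$ only at infinity, and a nondecreasing $\psi$ vanishing on an interval $(0,b]$ (for instance $\psi(u)=\max\{u-1,0\}$) is perfectly compatible with it, in which case the counting inequality degenerates. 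What actually saves the step is the definition of the class $\Psi$: ``$\psi$ increases on $[0,+\infty)$'' is meant here, as in Gogoladze's paper, in the strict sense, so $\psi(\delta)>0$ for every $\delta>0$. With that reading your proof is complete; as a minor streamlining, note that for $a_l>\delta$ one always has $\varphi(a_l)\le C\,\psi(a_l)$ with $C=\max\bigl\{M,\varphi(u_0)/\psi(\delta)\bigr\}$, which merges your blocks $I_2$ and $I_3$ into a single estimate.
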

Moreover, we will use the next Lemma of Glukhov \cite[p. 670]{Gl}.
\begin{lemma}[Glukhov \cite{Gl}]\label{lemma-GLU}
Let $\alpha_1,...,\alpha_n$ be real numbers. Let $p\in \mathbb{P}$ and $1<q\leq 2$. Then
$$
\frac{1}{n} \int\limits_{G^{p}} \left| \sum_{l=1}^n \alpha_l \prod\limits_{k=1}^{p}D_{l}^w\left(
x_{k}\right) \right| d\mathbf{\mu }\left( x_{1},...,x_{p}\right) 
\leq \frac{c}{{n}^{1/q}}\left( \sum_{k=1}^n |\alpha_k|^q
\right)^{1/q},
$$
where $c$ is depend only on $p$ and $q$.
\end{lemma}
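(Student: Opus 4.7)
The plan is to combine identity \eqref{dir2} with a dyadic localisation and the dual Hausdorff--Young inequality on the compact group $G^{p}$. Substituting \eqref{dir2} into $\prod_{k=1}^p D_l^w(x_k)$, expanding, and interchanging the order of summation gives
\begin{equation*}
\sum_{l=1}^n \alpha_l \prod_{k=1}^p D_l^w(x_k)=\sum_{\bar j}\prod_{k=1}^p D_{2^{j_k}}(x_k)\sum_{l:\,l_{j_k}=1\,\forall k}\alpha_l\prod_{k=1}^p w_l(x_k)w_{2^{j_k}}(x_k).
\end{equation*}
Since $D_{2^{j_k}}(x_k)=2^{j_k}\mathbf 1_{I_{j_k}}(x_k)\geq 0$, applying the triangle inequality and then the canonical measure-preserving bijection $I_{j_k}\to G$ shifting coordinates by $j_k$ reduces matters to estimating Walsh polynomials on $G^p$: on $I_{j_k}$, with $l_{j_k}=1$, one has $w_l(x_k)w_{2^{j_k}}(x_k)=w_{\tilde l^{(k)}}(y_k)$, where $\tilde l^{(k)}:=\sum_{i>j_k}l_i 2^{i-j_k}$.

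With $j_\ast:=\min_k j_k$ I would partition the admissible multiindices $l$ by the relation $l\sim l'$ iff $(l_i)_{i>j_\ast}=(l'_i)_{i>j_\ast}$. Each class has exactly $2^{j_\ast}$ elements, and choosing any $k_\ast$ with $j_{k_\ast}=j_\ast$ one sees that $\tilde l^{(k_\ast)}$ already determines the tail $(l_i)_{i>j_\ast}$, so distinct classes give distinct characters $\prod_k w_{\tilde l^{(k)}}(y_k)$ of the compact group $G^p$. Setting $c_{\mathrm{cl}}:=\sum_{l\in\mathrm{cl}}\alpha_l$, H\"older's inequality yields $\sum_{\mathrm{cl}}|c_{\mathrm{cl}}|^q\leq 2^{j_\ast(q-1)}\sum_l|\alpha_l|^q$; since $G^p$ is a probability space $\|\cdot\|_1\leq\|\cdot\|_{q'}$, and the dual Hausdorff--Young inequality on $G^p$ then gives
\begin{equation*}
\int_{G^p}\Bigl|\sum_{\mathrm{cl}}c_{\mathrm{cl}}\prod_{k=1}^p w_{\tilde l^{(k)}}(y_k)\Bigr|\,dy\leq\Bigl(\sum_{\mathrm{cl}}|c_{\mathrm{cl}}|^q\Bigr)^{1/q}\leq 2^{j_\ast(1-1/q)}\Bigl(\sum_l|\alpha_l|^q\Bigr)^{1/q}.
\end{equation*}
After reinstating the Jacobian factors $\prod 2^{\pm j_k}$, the $\bar j$-contribution to the original integral collapses to $2^{j_\ast(1-1/q)}(\sum_l|\alpha_l|^q)^{1/q}$.

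Finally I would sum over $\bar j\in\{0,\dots,|n|\}^p$: the number of such $\bar j$ with $\min_k j_k=J$ is at most $p(|n|+1-J)^{p-1}$, so after the substitution $J'=|n|-J$
\begin{equation*}
\sum_{\bar j}2^{j_\ast(1-1/q)}\leq p\cdot 2^{|n|(1-1/q)}\sum_{J'=0}^{|n|}2^{-J'(1-1/q)}(J'+1)^{p-1}\leq c_{p,q}\,n^{1-1/q},
\end{equation*}
the inner series converging \emph{precisely because} $1-1/q>0$. Dividing by $n$ yields the claimed bound. The main obstacle I anticipate is the bookkeeping in the middle step -- verifying injectivity of the class-to-character correspondence on which the Hausdorff--Young estimate rests, and organising H\"older's inequality with the correct weight $2^{j_\ast(q-1)}$ -- since a cruder bound through $(|n|+1)^{p-1}$ on the multiplicity would produce a spurious $(\log n)^{p-1}$ factor, whereas decoupling through $j_\ast$ makes the final $\bar j$-sum a genuine geometric series whenever $q>1$.
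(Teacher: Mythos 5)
The paper does not actually prove this lemma: it is imported verbatim from Glukhov's paper \cite{Gl} and used as a black box (the authors only remark on how the constant depends on the dimension). So there is no in-paper argument to compare against, and your proposal should be judged as a self-contained proof. As such, it is correct. The expansion of $\prod_k D_l^w(x_k)$ via \eqref{dir2}, the localisation to $\prod_k I_{j_k}$ where $D_{2^{j_k}}(x_k)=2^{j_k}$, and the identification $w_l(x_k)w_{2^{j_k}}(x_k)=w_{\tilde l^{(k)}}(y_k)$ under the measure-preserving shift are all sound, and the decisive point --- grouping the admissible $l$ into classes modulo the bits at positions $\le j_\ast=\min_k j_k$, checking via the coordinate $k_\ast$ realising the minimum that distinct classes yield distinct characters of $G^p$, and paying the price $2^{j_\ast(1-1/q)}$ through H\"older before applying the dual Hausdorff--Young inequality --- is exactly the decoupling needed to make the final sum over $\bar j$ geometric rather than logarithmically divergent. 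Two small points: a class has \emph{at most} (not exactly) $2^{j_\ast}$ elements once the constraints $1\le l\le n$ are imposed, but the H\"older step only uses the upper bound; and it is worth stating explicitly that $j_k\le |n|$ because $l_{j_k}=1$ forces $2^{j_k}\le l\le n$, which is what confines $\bar j$ to $\{0,\dots,|n|\}^p$ and lets $2^{|n|(1-1/q)}\le n^{1-1/q}$ close the argument. Note also that your constant behaves like $C_q^p\,p!$, which is consistent with the lemma as stated ($c=c(p,q)$) but slightly weaker than the bare $cp!$ the authors quote from \cite{Gl} and use in the corollary \eqref{mainwalsh}; this does not affect the lemma itself.
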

In paper \cite[p. 672, l.
12-13]{Gl} it is stated that  constant $c$  depend on dimension and in dimension $p$ it will be $cp!$. Now, we choose $\alpha_k$ as special numbers in Lemma of Glukhov. Set   
\[
\alpha _{k}= 
\begin{cases}
1,& k=2^{n-1},...,2^{n}-1, \\ 
0,& \textrm{otherwise}.
\end{cases}
\]
We immediately have 

\begin{corollary} Let $p\in \mathbb{P}$. Then there exists an absolute constant $c$ such that
\begin{equation}\label{mainwalsh}
\sup\limits_{n}\int\limits_{G^{p}}\frac{1}{2^{n}}\left\vert
\sum\limits_{l=2^{n-1}}^{2^{n}-1}\prod\limits_{k=1}^{p}D_{l}^w\left(
x_{k}\right) \right\vert d\mathbf{\mu }\left( x_{1},...,x_{p}\right) 
\leq cp!.  
\end{equation}
\end{corollary}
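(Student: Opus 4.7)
The plan is to apply Lemma~\ref{lemma-GLU} directly with the explicit sequence $(\alpha_k)$ given in the excerpt, using the remark that in dimension $p$ the absolute constant in Glukhov's bound is $cp!$. Since the choice of $\alpha_k$ is $0$ outside the dyadic block $[2^{n-1},2^n-1]$ and $1$ inside, I would take the Glukhov parameter ``$n$'' to be $N:=2^n$ (renaming to avoid the clash with the dyadic index in the corollary), so that the sum $\sum_{l=1}^{N}\alpha_l\prod_{k=1}^{p}D_l^w(x_k)$ collapses to exactly $\sum_{l=2^{n-1}}^{2^{n}-1}\prod_{k=1}^{p}D_l^w(x_k)$.

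Next I would fix any $q\in(1,2]$ (for concreteness $q=2$ is convenient) and evaluate the right-hand side of Glukhov's inequality on this $(\alpha_l)$. Because $\alpha_l\in\{0,1\}$ with exactly $2^{n-1}$ nonzero terms, one has
\begin{equation*}
\left(\sum_{l=1}^{N}|\alpha_l|^{q}\right)^{1/q}=(2^{n-1})^{1/q},
\qquad N^{1/q}=(2^{n})^{1/q},
\end{equation*}
so Glukhov's bound becomes
\begin{equation*}
\frac{1}{2^{n}}\int_{G^{p}}\Bigl|\sum_{l=2^{n-1}}^{2^{n}-1}\prod_{k=1}^{p}D_l^w(x_k)\Bigr|\,d\mu
\;\leq\;\frac{cp!}{(2^{n})^{1/q}}\,(2^{n-1})^{1/q}
\;=\;cp!\cdot 2^{-1/q}
\;\leq\;cp!.
\end{equation*}
The key observation is that the exponential factors in $2^n$ cancel exactly, so the estimate is independent of $n$.

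Finally I would take the supremum over $n\in\mathbb{P}$, which is permitted precisely because the right-hand side no longer involves $n$. This yields the claimed inequality \eqref{mainwalsh} with absolute constant $cp!$. There is no genuine obstacle here: the whole content of the corollary is the specialization of Glukhov's lemma, and the only thing to check is the arithmetic cancellation of $2^{n/q}$ with $(2^{n-1})^{1/q}$, together with the remark already recorded in the paragraph following Lemma~\ref{lemma-GLU} that the implicit constant in dimension $p$ is of order $p!$.
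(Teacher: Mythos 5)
Your proposal is correct and is exactly the paper's argument: the paper also obtains the corollary by specializing Glukhov's lemma to the indicator coefficients $\alpha_k$ of the dyadic block and invoking the remark that the constant in dimension $p$ is $cp!$. You merely make explicit the arithmetic cancellation $(2^{n-1})^{1/q}/(2^{n})^{1/q}=2^{-1/q}\leq 1$, which the paper leaves as "we immediately have."
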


\begin{lemma}\label{kaczmain}
There exists an absolute constant $c$ such that the
inequality 
\begin{equation}
\sup_{n}\int_{G^{d}}\frac{1}{2^{n}}\left\vert
\sum_{j=2^{n-1}}^{2^{n}-1}\prod_{k=1}^{d}D_{j}^{\kappa }(x_{k})\right\vert d%
\mathbf{\mu }\left( x_{1},...,x_{p}\right) \leq cd! 2^{d}  \label{main}
\end{equation}%
holds.
\end{lemma}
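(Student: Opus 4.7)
The plan is to reduce the Kaczmarz estimate to the Walsh estimate of Corollary \ref{mainwalsh} by exploiting the identity $\kappa_j(x)=r_{|j|}(x)w_{j-2^{|j|}}(\tau_{|j|}(x))$. First I split the Dirichlet sum at the power of two just below $j$: for $2^{n-1}\le j<2^n$ one has $|k|=n-1$ throughout the range $k=2^{n-1}+m$ with $0\le m<j-2^{n-1}$, and since $D_{2^{n-1}}^{\kappa}=D_{2^{n-1}}^{w}=D_{2^{n-1}}$ (by (\ref{dir})), the change of index yields
$$D_j^{\kappa}(x)=D_{2^{n-1}}(x)+r_{n-1}(x)\,D_{j-2^{n-1}}^{w}(\tau_{n-1}(x)).$$

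Next I expand the product $\prod_{k=1}^d D_j^{\kappa}(x_k)$ into $2^d$ terms indexed by subsets $S\subseteq\{1,\dots,d\}$: for $k\notin S$ one takes the factor $D_{2^{n-1}}(x_k)$, and for $k\in S$ the factor $r_{n-1}(x_k)D_{j-2^{n-1}}^{w}(\tau_{n-1}(x_k))$. After interchanging this subset sum with $\sum_{j=2^{n-1}}^{2^n-1}$, only the last factor depends on $j$ and the inner sum collapses (via $m=j-2^{n-1}$) to $\sum_{m=0}^{2^{n-1}-1}\prod_{k\in S}D_m^{w}(\tau_{n-1}(x_k))$. Integrating out the coordinates $k\notin S$ via $\int_G D_{2^{n-1}}\,d\mu=1$, using $|r_{n-1}|=1$, and applying the measure-preserving change of variables $y_k=\tau_{n-1}(x_k)$, I bound the contribution of each $S$ with $|S|=p\ge 1$ by
$$\frac{1}{2^n}\int_{G^p}\left|\sum_{m=1}^{2^{n-1}-1}\prod_{k=1}^p D_m^{w}(y_k)\right|d\mu(y_1,\dots,y_p),$$
while the empty-set contribution equals $1/2$ since $D_0^{w}=0$ leaves only the constant term $m=0$.

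To estimate the remaining Walsh integral I decompose the range dyadically, $\sum_{m=1}^{2^{n-1}-1}=\sum_{s=0}^{n-2}\sum_{m=2^s}^{2^{s+1}-1}$, and apply Corollary \ref{mainwalsh} in dimension $p$ to each block to obtain $\int_{G^p}\bigl|\sum_{m=2^s}^{2^{s+1}-1}\prod_k D_m^{w}\bigr|\,d\mu\le c\,p!\cdot 2^{s+1}$; the geometric sum gives at most $c\,p!\cdot 2^n$, so the normalized integral for every $S$ with $|S|=p$ is bounded by $c\,p!$. Summing over all subsets,
$$\frac12+c\sum_{p=1}^d\binom{d}{p}p!\le c\,d!\sum_{p=0}^d\binom{d}{p}=c\,d!\,2^d,$$
which yields the desired bound.

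The step that does the real work is the binomial-style decomposition separating the $j$-independent kernel $D_{2^{n-1}}(x_k)$ from the $j$-dependent Walsh part $r_{n-1}(x_k)D_{j-2^{n-1}}^{w}(\tau_{n-1}(x_k))$: this is precisely where the factor $2^d$ arises (from the $2^d$ subsets), and where the dimension of the residual Walsh integral drops to $|S|\le d$, enabling a clean appeal to Corollary \ref{mainwalsh} with its factorial growth $cp!$. Everything else---Fubini, measure-preservation of $\tau_{n-1}$, and dyadic summation---is routine once the decomposition is in place.
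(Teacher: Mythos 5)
Your argument is correct and follows essentially the same route as the paper: the Skvortsov-type splitting $D_{2^{n-1}+m}^{\kappa}(x)=D_{2^{n-1}}(x)+r_{n-1}(x)D_{m}^{w}(\tau_{n-1}(x))$, the binomial expansion of the product over subsets of $\{1,\dots,d\}$, integration of the $D_{2^{n-1}}$ factors, measure-preservation of $\tau_{n-1}$, and the Walsh corollary, ending with $\sum_{p}\binom{d}{p}p!\le d!\,2^{d}$. If anything, your dyadic decomposition of $\sum_{m=1}^{2^{n-1}-1}$ before invoking Corollary \eqref{mainwalsh} is slightly more careful than the paper, which applies that corollary to the non-dyadic range $\sum_{j=0}^{2^{n-1}-1}$ without comment.
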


\begin{proof}
It is known (see Skvortsov \cite{Skv}) that 
\begin{equation*}
D_{2^{A}+j}^{\kappa }(x)=D_{2^{A}}(x)+r_{A}(x)D_{j}^{w}(\tau _{A}(x)),\quad
0\leq j<2^{A}.
\end{equation*}%
This implies
\begin{equation*}
\prod_{k=1}^{d}D_{2^{n-1}+j}^{\kappa }(x_{k})=\prod_{k=1}^{d}\left(
D_{2^{n-1}}(x_{k})+r_{n-1}(x_{k})D_{j}^{w}(\tau _{n-1}(x_{k}))\right)
\end{equation*}

\begin{equation*}
=\sum_{l=0}^{d}\sum_{\substack{ k_{1},\ldots ,k_{l}\in \{1,\ldots ,d\} \\ %
k_{r}\neq k_{s}\text{ if }r\neq s}}\prod_{m=1}^{l}D_{2^{n-1}}(x_{k_{m}})%
\prod_{k_{q}^{\prime }\in S_{d}^{l}}r_{n-1}(x_{k_{q}^{\prime
}})D_{j}^{w}(\tau _{n-1}(x_{k_{q}^{\prime }}))
\end{equation*}%
with the notation $S_{d}^{l}:=\{1,\ldots ,d\}\backslash \{k_{1},\ldots
,k_{l}\}$. That is, we have

\begin{equation*}
\left\vert \sum_{j=0}^{2^{n-1}-1}\prod_{k=1}^{d}D_{2^{n-1}+j}^{\kappa
}(x_{k})\right\vert \leq
\end{equation*}

\begin{eqnarray*}
&\leq &\sum_{l=0}^{d}\sum_{\substack{ k_{1},\ldots ,k_{l}\in \{1,\ldots ,d\} 
\\ k_{r}\neq k_{s}\text{ if }r\neq s}}\left\vert
\prod_{m=1}^{l}D_{2^{n-1}}(x_{k_{m}})\prod_{k_{q}^{\prime }\in
S_{d}^{l}}r_{n-1}(x_{k_{q}^{\prime }})\right. \\
&&\left. \times \sum_{j=0}^{2^{n-1}-1}\prod_{k_{q}^{\prime }\in
S_{d}^{l}}D_{j}^{w}(\tau _{n-1}(x_{k_{q}^{\prime }}))\right\vert
\end{eqnarray*}%
and
\begin{eqnarray*}
L_{n}&:=&\int_{G^{d}}\frac{1}{2^{n}}\left\vert
\sum_{j=2^{n-1}}^{2^{n}-1}\prod_{k=1}^{d}D_{j}^{\kappa }(x_{k})\right\vert
d\mu(x_{1})\ldots d\mu(x_{d})\\
&\leq &\sum_{l=0}^{d}\sum_{\substack{ k_{1},\ldots ,k_{l}\in \{1,\ldots ,d\}
\\ k_{r}\neq k_{s}\text{ if }r\neq s}} \\
&&\int_{G^{d-l}}\frac{1}{2^{n}}\left\vert
\sum_{j=0}^{2^{n-1}-1}\prod_{k_{q}^{\prime }\in S_{d}^{l}}D_{j}^{w}(\tau
_{n-1}(x_{k_{q}^{\prime }}))\right\vert d\mu(x_{k_{1}^{\prime }})\ldots
d\mu(x_{k_{d-l}^{\prime }}).
\end{eqnarray*}%
Since the transformation $\tau _{n-1}\colon G\rightarrow G$ is
measure-preserving \cite{Skv} and inequality \ref{mainwalsh} immediately
yields 
\begin{eqnarray*}
L_{n}&\leq&\sum_{l=0}^{d}\sum_{\substack{ k_{1},\ldots ,k_{l}\in \{1,\ldots ,d\}
\\ k_{r}\neq k_{s}\text{ if }r\neq s}} \\
&&\int_{G^{d-l}}\frac{1}{2^{n}}\left\vert
\sum_{j=0}^{2^{n-1}-1}\prod_{k_{q}^{\prime }\in S_{d}^{l}}D_{j}^{w}(x_{k_{q}^{\prime }})\right\vert d\mu(x_{k_{1}^{\prime }})\ldots
d\mu(x_{k_{d-l}^{\prime }})\\
 &\leq & c
\sum_{l=0}^{d}\sum_{\substack{ k_{1},\ldots ,k_{l}\in \{1,\ldots
,d\} \\ k_{r}\neq k_{s}\text{ if }r\neq s}}(d-l)!
\leq d! \sum_{l=0}^{d}\sum_{\substack{ k_{1},\ldots ,k_{l}\in \{1,\ldots
,d\} \\ k_{r}\neq k_{s}\text{ if }r\neq s}} 1.
\end{eqnarray*}
Since, the number of all subsets of the set $\{ 1,\ldots ,d\}$ is $2^d$, we immediately have 
$$
L_n\leq cd!2^d.
$$
Taking the supremum for all $n\in {\mathbb{N}}$ completes the proof of 
Lemma \ref{kaczmain}.
\end{proof}

\begin{lemma}
\label{mainlemma}Let $p>0$. Then 
\begin{equation}
\left\{ \frac{1}{2^{A}}\sum\limits_{l=2^{A}}^{2^{A+1}-1}\left\vert
S_{l,l}^{\kappa}\left( f;x,y\right) \right\vert ^{p}\right\} ^{1/p}\leq
c\left\Vert f\right\Vert _{C}\left( p+1\right) ^{2}.  \label{mainineq}
\end{equation}
\end{lemma}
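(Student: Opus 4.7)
The plan is to use the standard moment method: bound the normalized $p$-th average by a $2m$-th moment for an integer $m$, expand the $2m$-th power into a $4m$-fold integral, and apply Lemma \ref{kaczmain} with dimension $d=4m$. Since $\bigl(\frac{1}{2^A}\sum_{l=2^A}^{2^{A+1}-1} a_l^p\bigr)^{1/p}$ is monotone increasing in $p$ (as the underlying counting measure is normalized), it suffices to handle $p=2m$ with $m$ a positive integer; picking $2m$ to be the smallest even integer $\geq \max(p,2)$ costs only a universal constant at the end.

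For the $2m$-th moment, I would use that each $\kappa_j$ is a character on $G$ (being a reordering of Walsh-Paley functions, which are characters), so $\sum_{j<l}\kappa_j(x)\kappa_j(s)=D_l^\kappa(x\oplus s)$ and therefore
\begin{equation*}
S_{l,l}^\kappa(f;x,y) = \int_{G^2} f(s,t)\, D_l^\kappa(x\oplus s)\, D_l^\kappa(y\oplus t)\, d\mu(s,t).
\end{equation*}
Since $S_{l,l}^\kappa(f;x,y)$ is real, $|S_{l,l}^\kappa(f;x,y)|^{2m}=S_{l,l}^\kappa(f;x,y)^{2m}$. Expanding the $2m$-th power as a $4m$-fold integral, bounding $f$ in sup-norm and interchanging summation with integration yield
\begin{equation*}
\frac{1}{2^A}\!\!\sum_{l=2^A}^{2^{A+1}-1}\!\! |S_{l,l}^\kappa(f;x,y)|^{2m} \leq \|f\|_C^{2m}\!\!\int_{G^{4m}}\!\! \left|\frac{1}{2^A}\!\!\sum_{l=2^A}^{2^{A+1}-1}\prod_{k=1}^{2m} D_l^\kappa(x\oplus s_k) D_l^\kappa(y\oplus t_k)\right| d\mu.
\end{equation*}
A measure-preserving change of variables $s_k\mapsto s_k\oplus x$, $t_k\mapsto t_k\oplus y$ removes the dependence on $(x,y)$, reducing the right-hand side to exactly the quantity bounded by Lemma \ref{kaczmain} with $d=4m$, which gives $c(4m)!\, 2^{4m}\|f\|_C^{2m}$.

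Extracting the $(2m)$-th root and applying Stirling's formula, one gets $((4m)!\, 2^{4m})^{1/(2m)}\leq c\,m^2$, hence
\begin{equation*}
\left(\frac{1}{2^A}\sum_{l=2^A}^{2^{A+1}-1} |S_{l,l}^\kappa(f;x,y)|^{2m}\right)^{1/(2m)}\leq c\|f\|_C\, m^2.
\end{equation*}
Since $2m\leq p+2$ for the chosen $m$, this is $\leq c\|f\|_C(p+1)^2$. The only delicate point is that the factorial (rather than exponential) growth in $d$ in Lemma \ref{kaczmain} is precisely what allows a quadratic — not exponential — dependence on $p$ after extracting the $(2m)$-th root; this is why Lemma \ref{kaczmain} was formulated with the sharp constant $cd!\,2^d$ rather than a cruder bound.
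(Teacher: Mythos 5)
Your proposal is correct and follows essentially the same route as the paper: reduce to an even integer exponent via monotonicity of the normalized power means, expand the even power of $S_{l,l}^{\kappa}$ as an iterated integral over $G^{2p}$, pull out $\left\Vert f\right\Vert _{C}^{p}$, and apply Lemma \ref{kaczmain} in dimension $2p$, with the factorial bound yielding the quadratic dependence on $p$ after extracting the root. The only cosmetic differences are that the paper reduces to $p=2^{m}$ rather than to the nearest even integer, and writes the partial sum directly as $\int f(x+s,y+t)D_{l}^{\kappa}(s)D_{l}^{\kappa}(t)\,d\mu$ instead of passing through $D_{l}^{\kappa}(x\oplus s)$ and a change of variables.
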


\begin{proof}
Since
\begin{eqnarray*}
\left\{ \frac{1}{2^{A}}\sum\limits_{l=2^{A}}^{2^{A+1}-1}\left\vert
S_{l,l}^{\kappa}\left( f;x,y\right) \right\vert ^{p}\right\} ^{1/p} 
&\leq &\left\{ \frac{1}{2^{A}}\sum\limits_{l=2^{A}}^{2^{A+1}-1}\left\vert
S_{l,l}^{\kappa}\left( f;x,y\right) \right\vert ^{p+1}\right\} ^{1/\left(
p+1\right) }
\end{eqnarray*}%
without lost of generality we can suppose that $p=2^{m},m\in \mathbb{P}$. We
can write%
\begin{equation*}
\left\vert S_{l,l}^{\kappa}\left( f;x,y\right) \right\vert ^{2}=S_{l,l}^{\kappa}\left(
f;x,y\right) S_{l,l}^{\kappa}\left( f;x,y\right) 
\end{equation*}%
\begin{equation*}
=\int\limits_{G^{2}}f\left( x+s_{1},y+t_{1}\right) D_{l}^{\kappa}\left(
s_{1}\right) D_{l}^{\kappa}\left( t_{1}\right) d\mathbf{\mu }\left(
s_{1},t_{1}\right) 
\end{equation*}%
\begin{equation*}
\times \int\limits_{G^{2}}f\left( x+s_{2},y+t_{2}\right) D_{l}^{\kappa}\left(
s_{2}\right) D_{l}^{\kappa}\left( t_{2}\right) d\mathbf{\mu }\left(
s_{2},t_{2}\right) 
\end{equation*}%
\begin{equation*}
=\int\limits_{G^{4}}f\left( x+s_{1},y+t_{1}\right) f\left(
x+s_{2},y+t_{2}\right) 
\end{equation*}%
\begin{equation*}
\times D_{l}^{\kappa}\left( s_{1}\right) D_{l}^{\kappa}\left( s_{2}\right)
D_{l}^{\kappa}\left( t_{1}\right) D_{l}^{\kappa}\left( t_{2}\right) d\mathbf{\mu }%
\left( s_{1},t_{1},s_{2},t_{2}\right) .
\end{equation*}%
Hence from Lemma \ref{kaczmain}, we get%
\begin{equation*}
\left\vert S_{l,l}^{\kappa}\left( f;x,y\right) \right\vert ^{p}=\left( \left\vert
S_{l,l}^{\kappa}\left( f;x,y\right) \right\vert ^{2}\right) ^{p/2}
\end{equation*}%
\begin{equation*}
=\int\limits_{G^{2p}}\prod\limits_{k=1}^{p}f\left( x+s_{k},y+t_{k}\right) 
 \prod\limits_{i=1}^{p}D_{l}^{\kappa}\left( s_{i}\right)
\prod\limits_{j=1}^{p}D_{l}^{\kappa}\left( t_{j}\right) d\mathbf{\mu }\left(
s_{1},t_{1},...,s_{p},t_{p}\right) ,
\end{equation*}
and
\begin{equation*}
\left\{ \frac{1}{2^{A}}\sum\limits_{l=2^{A}}^{2^{A+1}-1}\left\vert
S_{l,l}^{\kappa}\left( f;x,y\right) \right\vert ^{p}\right\} ^{1/p}
\end{equation*}%
\begin{equation*}
\leq \left( \int\limits_{G^{2p}}\prod\limits_{k=1}^{p}f\left(
x+s_{k},y+t_{k}\right) \right. 
\end{equation*}%
\begin{equation*}
\left. \times \frac{1}{2^{A}}\left\vert
\sum\limits_{l=2^{A}}^{2^{A+1}-1}\prod\limits_{i=1}^{p}D_{l}^{\kappa}\left(
s_{i}\right) \prod\limits_{j=1}^{p}D_{l}^{\kappa}\left( t_{j}\right) \right\vert d%
\mathbf{\mu }\left( s_{1},t_{1},...,s_{p},t_{p}\right) \right) ^{1/p}
\end{equation*}%
\begin{equation*}
\leq \left\Vert f\right\Vert _{C}\left( \int\limits_{G^{2p}}\frac{1}{2^{A}}%
\left\vert
\sum\limits_{l=2^{A}}^{2^{A+1}-1}\prod\limits_{i=1}^{p}D_{l}^{\kappa}\left(
s_{i}\right) \prod\limits_{j=1}^{p}D_{l}^{\kappa}\left( t_{j}\right) \right\vert d%
\mathbf{\mu }\left( s_{1},t_{1},...,s_{p},t_{p}\right) \right) ^{1/p}
\end{equation*}%
\begin{equation*}
\leq cp^{2}\left\Vert f\right\Vert _{C}.
\end{equation*}

Lemma \ref{mainlemma} is proved.
\end{proof}

\begin{lemma}\label{BAest}
Let $f\in C\left( G^{2}\right) $ and $p>0.$ Then 
\begin{eqnarray} \label{bestestimation} 
&&\frac{1}{n}\sum\limits_{l=1}^{n}\left\vert S_{l,l}^{\kappa}\left( f;x,y\right)
-f\left( x,y\right) \right\vert ^{p} \\
&\leq &c^{p}\cdot \left( p+1\right) ^{2p}\left\{ \frac{1}{n}%
\sum\limits_{l=1}^{n}\left( E_{l}^{\left( 1\right) }\left( f\right) \right)
^{p}+\frac{1}{n}\sum\limits_{r=1}^{n}\left( E_{r}^{\left( 2\right) }\left(
f\right) \right) ^{p}\right\} .  \notag
\end{eqnarray}
\end{lemma}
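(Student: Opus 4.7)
The plan is to decompose the sum $\sum_{l=1}^n |S_{l,l}^{\kappa}(f)-f|^p$ dyadically and to control each dyadic block by applying Lemma \ref{mainlemma} to a suitable residual function, finally using the monotonicity of the best-approximation quantities to shift the index and recover $E_l^{(i)}(f)$ on the right-hand side.

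More precisely, let $2^N\leq n<2^{N+1}$ and write $\{1,\dots,n\}$ as the disjoint union $\{1\}\cup\bigcup_{A=1}^{N-1}[2^A,2^{A+1})\cup[2^N,n]$. For $l$ in a dyadic block $[2^A,2^{A+1})$ we have $|l|=A$, so inequality (B1) gives
\[
|S_{l,l}^{\kappa}(f;x,y)-f(x,y)|\leq |S_{l,l}^{\kappa}(f-S_{2^A,2^A}(f);x,y)|+2E_{2^A,2^A}(f).
\]
Raising to the $p$-th power and using the elementary bound $(a+b)^p\leq c_p(a^p+b^p)$ (valid for all $p>0$ with $c_p$ an absolute constant), summing in $l$ over the block, and invoking Lemma \ref{mainlemma} applied to the continuous function $g_A:=f-S_{2^A,2^A}(f)$, we obtain
\[
\sum_{l=2^A}^{2^{A+1}-1}|S_{l,l}^{\kappa}(f;x,y)-f(x,y)|^p\leq c^p(p+1)^{2p}\,2^A\,\|g_A\|_C^{\,p}+c\,2^A\,E_{2^A,2^A}(f)^p.
\]
Since by (B4) $\|g_A\|_C\leq 2E_{2^A,2^A}(f)$ and by (B2) $E_{2^A,2^A}(f)\leq 2E_{2^A}^{(1)}(f)+2E_{2^A}^{(2)}(f)$, the block is bounded by $c^p(p+1)^{2p}\,2^A\,\bigl[(E_{2^A}^{(1)}(f))^p+(E_{2^A}^{(2)}(f))^p\bigr]$.

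The key reassembly step uses that $E_l^{(i)}(f)$ is non-increasing in $l$: for $l\in[2^{A-1},2^A)$ we have $E_l^{(i)}(f)\geq E_{2^A}^{(i)}(f)$, whence
\[
2^A\,(E_{2^A}^{(i)}(f))^p\leq 2\sum_{l=2^{A-1}}^{2^A-1}(E_l^{(i)}(f))^p.
\]
Substituting, summing over $A=1,\dots,N$ (the initial term $l=1$ is handled directly from (B1), (B2), (B4)), and noting that the last partial block $[2^N,n]$ is contained in the full block $[2^N,2^{N+1})$, we get
\[
\sum_{l=1}^n|S_{l,l}^{\kappa}(f;x,y)-f(x,y)|^p\leq c^p(p+1)^{2p}\sum_{l=1}^n\bigl[(E_l^{(1)}(f))^p+(E_l^{(2)}(f))^p\bigr].
\]
Dividing by $n$ yields (\ref{bestestimation}).

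The main technical obstacle is the index-shift at the reassembly stage: Lemma \ref{mainlemma} naturally produces the best approximation at the left endpoint $2^A$ of the block $[2^A,2^{A+1})$, while the statement requires $E_l^{(i)}(f)$ averaged over the same $l$. Fortunately monotonicity of $E_l^{(i)}(f)$ lets us bound $(E_{2^A}^{(i)}(f))^p$ by an average of $(E_l^{(i)}(f))^p$ over the previous dyadic block, and after summing in $A$ the shift costs only an absolute constant, which is absorbed into $c^p$.
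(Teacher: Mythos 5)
Your proposal is correct and follows essentially the same route as the paper: dyadic blocking of $\{1,\dots,n\}$, the bound (B1) combined with $(a+b)^p\le 2^p(a^p+b^p)$, Lemma \ref{mainlemma} applied to $f-S_{2^A,2^A}(f)$ together with (B4) and (B2), and finally the monotonicity of $E_l^{(i)}(f)$ to convert $2^A\,(E_{2^A}^{(i)}(f))^p$ into a sum over the preceding dyadic block. The paper's write-up is slightly terser (it sums over $A=0,\dots,N$ directly and leaves the factor-of-two index shift implicit), but the argument is the same.
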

\begin{proof}Since
\begin{equation*}
\left( a+b\right) ^{\beta }\leq 2^{\beta }\left( a^{\beta }+b^{\beta
}\right) ,\beta >0
\end{equation*}%
using (\ref{B1})-(\ref{B4}) and  Lemma \ref{mainlemma} we get 
\begin{equation}
\frac{1}{2^{A}}\sum\limits_{l=2^{A}}^{2^{A+1}-1}\left\vert S_{l,l}^{\kappa}\left(
f;x,y\right) -f\left( x,y\right) \right\vert ^{p}  \label{above}
\end{equation}%
\begin{equation*}
\leq \frac{2^{p}}{2^{A}}\sum\limits_{l=2^{A}}^{2^{A+1}-1}\left\vert
S_{l,l}^{\kappa}\left( f-S_{2^{A},2^{A}}\left( f\right) ;x,y\right) \right\vert
^{p}+2^{2p}E_{2^{A},2^{A}}^{p}\left( f\right) 
\end{equation*}
\begin{equation*}
\leq c^{p}\left( p+1\right) ^{2p}\left\Vert f-S_{2^{A},2^{A}}\left( f\right)
\right\Vert _{C}^{p}+c^{p}E_{2^{A},2^{A}}^{p}\left( f\right)
\end{equation*}%
\begin{equation*}
\leq c^{p}\left( p+1\right) ^{2p}\left( \left( E_{2^{A}}^{\left( 1\right)
}\left( f\right) \right) ^{p}+\left( E_{2^{A}}^{\left( 2\right) }\left(
f\right) \right) ^{p}\right) .
\end{equation*}

Let $2^{N}\leq n<2^{N+1}$ . Then from (\ref{above}) we have%
\begin{equation*}
\frac{1}{n}\sum\limits_{l=1}^{n}\left\vert S_{l,l}^{\kappa}\left( f;x,y\right)
-f\left( x,y\right) \right\vert ^{p}
\end{equation*}%
\begin{equation*}
\leq \frac{1}{n}\sum\limits_{l=1}^{2^{N+1}-1}\left\vert S_{l,l}^{\kappa}\left(
f;x,y\right) -f\left( x,y\right) \right\vert ^{p}
\end{equation*}%
\begin{equation*}
=\frac{1}{n}\sum\limits_{A=0}^{N}\sum\limits_{l=2^{A}}^{2^{A+1}-1}\left\vert
S_{l,l}^{\kappa}\left( f;x,y\right) -f\left( x,y\right) \right\vert ^{p}
\end{equation*}%
\begin{equation*}
\leq \frac{c^{p}\left( p+1\right) ^{2p}}{n}\sum\limits_{A=0}^{N}2^{A}\left(
\left( E_{2^A}^{\left( 1\right) }\left( f\right) \right) ^{p}+\left(
E_{2^A}^{\left( 2\right) }\left( f\right) \right) ^{p}\right) 
\end{equation*}%
\begin{equation*}
\leq \frac{c^{p}\left( p+1\right) ^{2p}}{n}\sum\limits_{A=1}^{N}\sum%
\limits_{l=2^{A-1}}^{2^{A}-1}\left( \left( E_{l}^{\left( 1\right) }\left(
f\right) \right) ^{p}+\left( E_{l}^{\left( 2\right) }\left( f\right) \right)
^{p}\right) 
\end{equation*}%
\begin{equation*}
\leq c^{p}\cdot \left( p+1\right) ^{2p}\left\{ \frac{1}{n}%
\sum\limits_{l=1}^{n}\left( E_{l}^{\left( 1\right) }\left( f\right) \right)
^{p}+\frac{1}{n}\sum\limits_{r=1}^{n}\left( E_{r}^{\left( 2\right) }\left(
f\right) \right) ^{p}\right\} .
\end{equation*}

Lemma \ref{BAest} is proved.
\end{proof}

\section{Proofs of Main Results}

The Walsh-Paley version of Theorem \ref{T1} were proved in \cite{GGCA}.
Based on inequality (\ref{bestestimation}) the same construction works for
the Walsh-Kaczmarz case. Therefore the proof of Theorem \ref{T1} will be
omitted.

\begin{proof}[Proof of Theorem 2]
a) It is easily seen  that if $\varphi \in \Psi $, then $e^{\varphi }-1\in
\Psi .$ Besides, (\ref{T2-C}) implies the existence of a number $A$ such
that 
\begin{equation*}
\overline{\lim\limits_{u\rightarrow \infty }}\frac{e^{\varphi \left(
u\right) }-1}{e^{Au^{1/2}}-1}<\infty .
\end{equation*}%
Therefore, in view of Lemma \ref{gogoladze}, to prove Theorem \ref{T2} it is sufficient to show that 
\begin{equation}
\lim\limits_{n\rightarrow \infty }\left\Vert \frac{1}{n}\sum%
\limits_{l=1}^{n}\left( e^{A\left\vert S_{l,l}^{\kappa}\left( f\right)
-f\right\vert ^{1/2}}-1\right) \right\Vert _{C}=0.  \label{=0}
\end{equation}%
The validity of equality (\ref{=0}) immediately follows from Theorem \ref{T1}%
.

b) Such a construction for the analogical problem has already been made for the Walsh-Paley case \cite{GGCA}, where Walsh-Paley function defined on [0,1]. We will use idea from above mentioned paper and we construct similar, but not the same function for Walsh-Kaczmarz case, where Walsh-Kaczmarz system is defined on Walsh group. The common aspect of two 
construction is stated in the inequality \eqref{5Ak}, later.

First of all, let us prove the validity of point b) in the one-dimensional
case. In particular, we prove that if $\psi \in \Psi $ satisfying the condition 
\begin{equation*}
\overline{\lim\limits_{u\rightarrow \infty }}\frac{\psi \left( u\right) }{{u}%
}=\infty,
\end{equation*}%
then there exists a function $f\in C\left( G\right) $ such that 
\begin{equation}
\overline{\lim\limits_{m\rightarrow \infty }}\frac{1}{m}\sum%
\limits_{l=1}^{m}\left( e^{\psi \left( \left\vert S_{l}^{\kappa}\left( f;0\right)
-f\left( 0\right) \right\vert \right) }\right) =+\infty .  \label{1-dim}
\end{equation}

Let $\left\{ B_{k}:k\geq 1\right\} $ be an increasing sequence of positive
integers such that%
\begin{equation}\label{Bk>}
B_{k}>2B_{k-1},
\end{equation}%
\begin{equation}
\frac{\psi \left( B_{k}\right) }{B_{k}}>\frac{5k}{c^{\prime }}\text{%
\thinspace \thinspace ,}  \label{>160}
\end{equation}%
where $c^{\prime }$ will be defined later.

Set 
\begin{equation*}
A_{k}:=\left[ \frac{k}{c^{\prime }}B_{k}\right]
\end{equation*}%
and 
\begin{equation*}
N_{A_{k}}:=2^{2A_{k}}+2^{2A_{k}-2}+\cdots +2^{2}+2^{0},
\end{equation*}

Set%
\begin{equation*}
f_{j}\left( x\right) :=\frac{1}{j+1}\sum\limits_{l=A_{j-1}}^{A_{j}-1}\sum%
\limits_{x_{0}=0}^{1}\cdots \sum\limits_{x_{2A_{j}-2l-1}=0}^{1}\text{sgn}%
\left( D_{N_{A_{j}}}^{\kappa}\left( x\right) \right)
\end{equation*}%
\begin{equation*}
\times \mathbb{I}_{I_{2A_{j}+2}\left(
x_{0},...,x_{2A_{j}-2l-1},x_{2A_{j}-2l}=1,0,...,0\right) }\left( x\right) ,
\end{equation*}%
\begin{equation*}
f\left( x\right) :=\sum\limits_{j=0}^{\infty }f_{j}\left( x\right) ,\quad 
f\left(
0\right) =0,
\end{equation*}%
where $\mathbb{I}_{E}$ is characteristic function of the set $E\subset G_{m}$%
.

It is easily seen that $f\in C\left( G\right) $.

We can write%
\begin{equation}\label{J1-J3}
\left\vert S_{N_{A_{k}}}^{\kappa}\left( f;0\right) -f\left( 0\right) \right\vert
=\left\vert S_{N_{A_{k}}}^{\kappa}\left( f;0\right) \right\vert   
\end{equation}%
\begin{equation*}
=\left\vert \int\limits_{G}f\left( t\right) D_{N_{A_{k}}}^{\kappa }\left(
t\right) d\mu \left( t\right) \right\vert 
\end{equation*}%
\begin{equation*}
\geq \left\vert \int\limits_{G}f_{k}\left( t\right) D_{N_{A_{k}}}^{\kappa
}\left( t\right) d\mu \left( t\right) \right\vert 
\end{equation*}%
\begin{equation*}
-\sum\limits_{j=k+1}^{\infty }\left\vert \int\limits_{G}f_{j}\left( t\right)
D_{N_{A_{k}}}^{\kappa }\left( t\right) d\mu \left( t\right) \right\vert 
\end{equation*}%
\begin{equation*}
-\sum\limits_{j=0}^{k-1}\left\vert \int\limits_{G}f_{j}\left( t\right)
D_{N_{A_{k}}}^{\kappa }\left( t\right) d\mu \left( t\right) \right\vert 
\end{equation*}%
\begin{equation*}
=J_{1}-J_{2}-J_{3}.
\end{equation*}

From the definition of the function $f$ we have%
\begin{equation}
J_{1}=\frac{1}{k+1}\sum\limits_{l=A_{k-1}}^{A_{k}-1}\sum%
\limits_{t_{0}=0}^{1}\cdots \sum\limits_{t_{2A_{k}-2l-1}=0}^{1}  \label{J1-1}
\end{equation}%
\begin{equation*}
\int\limits_{_{I_{2A_{k}+2}\left(
t_{0},...,t_{2A_{k}-2l-1},t_{2A_{k}-2l}=1,0,...,0\right) }}\left\vert
D_{N_{A_{k}}}^{\kappa }\left( t\right) \right\vert d\mu \left( t\right) 
\end{equation*}%
Since (see Skvortsov \cite{Skv})%
\begin{equation}
D_{N_{A_{k}}}^{\kappa }\left( t\right) =D_{2^{2A_{k}}}^{w}\left( t\right)
+r_{2A_{k}}\left( t\right) D_{N_{A_{k}-1}}^{w}\left( \tau _{2A_{k}}\left(
t\right) \right)   \label{D-K}
\end{equation}%
\begin{equation*}
=r_{2A_{k}}\left( t\right) D_{N_{A_{k}-1}}^{w}\left( \tau _{2A_{k}}\left(
t\right) \right) ,
\end{equation*}%
we can write%
\begin{equation*}
\left\vert D_{N_{A_{k}}}^{\kappa }\left( t\right) \right\vert =\left\vert
D_{N_{A_{k}-1}}^{w}\left( \tau _{2A_{k}}\left( t\right) \right) \right\vert 
\end{equation*}%
\begin{equation*}
=\left\vert \sum\limits_{j=0}^{l}r_{2j}\left( \tau _{2A_{k}}\left( t\right)
\right) D_{2j}\left( \tau _{2A_{k}}\left( t\right) \right) \right\vert 
\end{equation*}%
\begin{equation*}
\geq 2^{2l}-\sum\limits_{j=0}^{l-1}2^{2j}\geq c2^{2l},
\end{equation*}%
\begin{equation*}
t\in I_{2A_{k}+2}\left(
t_{0},...,t_{2A_{k}-2l-1},t_{2A_{k}-2l}=1,0,...,0\right) .
\end{equation*}%
Hence from (\ref{Bk>}) and (\ref{J1-1}) we have%
\begin{equation}
J_{1}\geq \frac{c}{k}\sum\limits_{l=A_{k-1}}^{A_{k}-1}\sum%
\limits_{t_{0}=0}^{1}\cdots \sum\limits_{t_{2A_{k}-2l-1}=0}^{1}\frac{2^{2l}}{%
2^{2A_{k}}}  \label{J1}
\end{equation}%
\begin{equation*}
\geq \frac{c}{k}\sum\limits_{l=A_{k-1}}^{A_{k}-1}\frac{2^{2l}2^{2A_{k}-2l}}{%
2^{2A_{k}}}\geq \frac{c\left( A_{k}-A_{k-1}\right) }{k}\geq \frac{c_{0}A_{k}%
}{k}.
\end{equation*}

For $J_{2}$ we have%
\begin{equation}
J_{2}\leq c\sum\limits_{j=k+1}^{\infty }\frac{1}{j+1}\sum%
\limits_{l=A_{j-1}}^{A_{j}-1}\frac{2^{2A_{j}-2l}}{2^{2A_{j}}}N_{A_{k}}
\label{J2}
\end{equation}%
\begin{equation*}
\leq \frac{cN_{A_{k}}}{k}\sum\limits_{l=A_{k}}^{\infty }\frac{1}{2^{l}}\leq 
\frac{c}{k}.
\end{equation*}

By (\ref{dir}) and from the construction of the function $f_{j}$ we can
write 
\begin{equation*}
\int\limits_{G}f_{j}\left( t\right) D_{N_{A_{k}}}^{\kappa }\left( t\right)
d\mu \left( t\right) 
\end{equation*}%
\begin{equation*}
=\int\limits_{G}f_{j}\left( t\right) r_{2A_{k}}\left( t\right)
D_{N_{A_{k}-1}}^{w}\left( \tau _{2A_{k}}\left( t\right) \right) d\mu \left(
t\right) =0,
\quad j=1,2,...,k-1
\end{equation*}%
consequently%
\begin{equation}
J_{3}=0.  \label{J3}
\end{equation}

Combining (\ref{>160})-(\ref{J3}) we conclude that%
\begin{equation}\label{5Ak}
\left\vert S_{N_{A_{k}}}^{\kappa}\left( f;0\right) \right\vert =\left\vert
S_{N_{A_{k}}}^{\kappa}\left( f;0\right) -f\left( 0\right) \right\vert \geq \frac{%
c^{\prime }A_{k}}{k}=B_{k},  
\end{equation}%
\begin{equation*}
\psi \left( \left\vert S_{N_{A_{k}}}^{\kappa}\left( f;0\right) \right\vert
\right) \geq \psi \left( B_{k}\right) \geq \frac{5k}{c^{\prime }}B_{k}\geq
5A_{k}.
\end{equation*}
We note that for
Walsh-Fourier series function with properties \eqref{5Ak} was constructed in \cite{GGCA}.
The construction in \cite{GGCA} is given for  $[0,1)$ interval. 

Let us write $\varphi \left( u\right) =\lambda \left( u\right) \sqrt{u}$ and define 
$\psi \left( u\right) :=\lambda \left( u^{2}\right) u.$ Then%
\begin{equation*}
\lim\limits_{u\rightarrow \infty }\frac{\psi \left( u\right) }{u}=+\infty .
\end{equation*}%
Therefore (see \eqref{5Ak}) there exists a function $f\in C\left( G\right) $
for which%
\begin{equation}
\psi \left( \left\vert S_{N_{A_{k}}}^{\kappa}\left( f,0\right) \right\vert
\right) \geq 5A_{k}.  \label{ineq2}
\end{equation}%
Set%
\begin{equation*}
F\left( x,y\right) :=f\left( x\right) f\left( y\right) .
\end{equation*}%
It is easy to show that%
\begin{eqnarray*}
\varphi \left( \left\vert S_{N_{A_{k}},N_{A_{k}}}^{\kappa}\left( F;0,0\right)
\right\vert \right) 
&=&\varphi \left( \left\vert S_{N_{A_{k}}}^{\kappa}\left( f;0\right) \right\vert
^{2}\right)  \\
&=&\lambda \left( \left\vert S_{N_{A_{k}}}^{\kappa}\left( f;0\right) \right\vert
^{2}\right) \left\vert S_{N_{A_{k}}}^\kappa\left( f;0\right) \right\vert  \\
&=&\psi \left( \left\vert S_{N_{A_{k}}}^{\kappa}\left( f;0\right) \right\vert
\right) .
\end{eqnarray*}%
Consequently, from (\ref{ineq2})  we have%
\begin{eqnarray*}
\frac{1}{N_{A_{k}}}\sum\limits_{i=1}^{N_{A_{k}}}e^{\varphi \left(
\left\vert S_{i,i}^\kappa\left( F;0,0\right) \right\vert \right) } 
&\geq &\frac{1}{N_{A_{k}}}e^{\varphi \left( \left\vert
S_{N_{A_{k}},N_{A_{k}}}^\kappa\left( F;0,0\right) \right\vert \right) } \\
&=&\frac{1}{N_{A_{k}}}e^{\psi \left( \left\vert S_{N_{A_{k}}}^\kappa\left(
f;0\right) \right\vert \right) } \\
&\geq &\frac{e^{5A_{k}}}{2^{2A_{k}}}\rightarrow \infty \text{ as~\ }%
k\rightarrow \infty .
\end{eqnarray*}%
Theorem \ref{T2} is proved.
\end{proof}

\end{document}